\newtheorem{thm}{Theorem}[section]
\newtheorem{theorem}[thm]{Theorem}
\begin{document}

\title{An odd $[1,b]$-factor in regular graphs from eigenvalues}
\author{Sungeun Kim\thanks{Incheon Academy of Science and Arts, Korea, Incheon, 22009, tjddms9282@gmail.com}\, 
	Suil O\thanks{Department of Applied Mathematics and Statistics, The State University of New York, Korea, Incheon, 21985, suil.o@sunykorea.ac.kr. Research supported by NRF-2017R1D1A1B03031758 and by NRF-2018K2A9A2A06020345}\,
	Jihwan Park\thanks{Incheon Academy of Science and Arts, Korea, Incheon, 22009, bjihwan37@gmail.com}\,
	and Hyo Ree\thanks{Incheon Academy of Science and Arts, Korea, Incheon, 22009, reehyo2234@naver.com}
}

\maketitle

\begin{abstract}
An odd $[1,b]$-factor of a graph $G$ is a spanning subgraph $H$ such that
for each vertex $v \in V(G)$, $d_H(v)$ is odd and $1\le d_H(v) \le b$.
Let $\lambda_3(G)$ be the third largest eigenvalue of the adjacency matrix of $G$.
For positive integers $r \ge 3$ and even $n$, 
Lu, Wu, and Yang~\cite{LWY} proved a lower bound for $\lambda_3(G)$
in an $n$-vertex $r$-regular graph $G$ to gurantee the existence of an odd $[1,b]$-factor in $G$.
In this paper, we improve the bound; it is sharp for every $r$.\\

\noindent
\textbf{Keywords:} Odd $[1,b]$-factor, eigenvalues\\

\noindent
\textbf{AMS subject classification 2010:} 05C50, 05C70
\end{abstract}

\section{Introduction}
In this paper we deal only with finite and undirected graphs without loops or multiple edges. 
The {\it adjacency matrix} $A(G)$ of $G$ is the $n$-by-$n$
matrix in which entry $a_{i,j}$ is 1 or 0 according to whether $v_i$ and $v_j$ are adjacent or not, where $V(G) = \{v_1,\ldots, v_n\}$. The {\it eigenvalues} of $G$ are the eigenvalues of its adjacency matrix $A(G)$.
Let $\lambda_1(G),\ldots, \lambda_n(G)$ be its eigenvalues in nonincreasing order. Note that the spectral radius of $G$, written $\rho(G)$ equals $\lambda_1(G)$.

The degree of a vertex $v$ in $V(G)$, written $d_G(v)$, is the number of vertices adjacent to $v$.
An {\it odd} (or {\it even) $[a,b]$-factor} of a graph $G$ is a spanning subgraph $H$ of $G$ such that
for each vertex $v \in V(G)$, $d_H(v)$ is odd (or even) and $a \le d_H(v) \le b$; an $[a,a]$-factor is called the {\it $a$-factor}.
For a positive integer $r$, a graph is {\it $r$-regular} if every vertex has the same degree $r$. Note that $\lambda_1(G)=r$ if $G$ is $r$-regular. Many researchers proved the conditions for a graph to have an $a$-factor, or (even or odd) $[a,b]$-factor.
(See \cite{AKNT, K, M, N})
Brouwer and Haemers started to investiage the relations between eigenvalues and the existence of $1$-factor.


In fact, they~\cite{BH2} proved that 
if $G$ is an $r$-regular graph without an 1-factor, then 
$$\lambda_3(G) > \begin{cases}
r-1 + \frac 3{r+1} ~~\text{ if } r \text{ is even, }\\
r-1 + \frac 3{r+2} ~~\text{ if } r \text{ is odd}
\end{cases}$$
by using Tutte’s 1-Factor Theorem~\cite{T}, which is a special case of Berge-Tutte Formula~\cite{B}.
Cioab{\v a}, Gregory, and Haemers~\cite{CGH} improved their bound and in fact proved
that if $G$ is an $r$-regular graph without an 1-factor, then 
$$\lambda_3(G) \ge \begin{cases}
\theta = 2.85577... & \text{if } r=3, \\
\frac 12 (r-2 + \sqrt{r^2+12}) & \text{if } r\ge 4 \text{ is } \text{ even}, \\
\frac 12 (r-3 + \sqrt{(r+1)^2+16}) & \text{if } r\ge 5 \text{ is } \text{ odd},
\end{cases}$$
where $\theta$ is the largest root of $x^3 - x^2 - 6x +2=0$. 
More generally, O and Cioab{\v a}~\cite{CO} determined connections between the eigenvalues of a $t$-edge connected $r$-regular graph and its matching number when $1 \le t \le r - 2$. 
In 2010, Lu, Wu, and Yang~\cite{LWY} proved that if an $r$-regular graph $G$ with even number of vertices has no odd $[1,b]$-factor, then  $$\lambda_3(G) > \begin{cases}
r - \frac{\lceil \frac rb \rceil -2}{r+1}+\frac 1{(r+1)(r+2)}~~\text{ if } r \text{ is even and } \lceil \frac rb \rceil \text{ is even,}\\
r - \frac{\lceil \frac rb \rceil -1}{r+1}+\frac 1{(r+1)(r+2)}~~\text{ if } r \text{ is even and } \lceil \frac rb \rceil \text{ is odd,}\\
r - \frac{\lceil \frac rb \rceil -1}{r+1}+\frac 1{(r+2)^2}~~~~~~\text{ if } r \text{ is odd and } \lceil \frac rb \rceil \text{ is even,}\\
 r - \frac{\lceil \frac rb \rceil -2}{r+1}+\frac 1{(r+2)^2}~~~~~~\text{ if } r \text{ is odd and } \lceil \frac rb \rceil \text{ is odd.}
\end{cases}$$

To prove the above bounds in the paper~\cite{LWY}, they used Amahashi's result.

\begin{thm}{\rm \cite{A}}\label{A}
	Let $G$ be a graph and let $b$ be a positive odd integer. Then $G$ contains an odd $[1,b]$-factor
	if and only if for every subset $S\subseteq V(G)$, $o(G-S)\le b|S|$, where $o(H)$ is the number of odd components in a graph $H$.
\end{thm}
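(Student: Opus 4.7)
My plan is to prove Theorem~\ref{A} in two steps, separating the easy necessity from the harder sufficiency.

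First, for the necessity, I would assume that $H$ is an odd $[1,b]$-factor of $G$ and fix any $S \subseteq V(G)$. For each odd component $C$ of $G - S$, I compute $\sum_{v \in C} d_H(v)$ modulo $2$: since each $d_H(v)$ is odd and $|C|$ is odd, this sum is odd, while $H$-edges with both endpoints in $C$ contribute evenly, so the number of $H$-edges between $C$ and $S$ must be odd, in particular at least $1$. Summing over all odd components of $G - S$ yields
\[
o(G-S) \;\le\; e_H(V(G)\setminus S,\, S) \;\le\; \sum_{v \in S} d_H(v) \;\le\; b|S|.
\]

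For the sufficiency, my plan is to reduce to Tutte's classical 1-factor theorem via an auxiliary construction. I would form a graph $G^{*}$ by attaching to each $v \in V(G)$ a carefully chosen gadget $B_v$ built from $b-1$ new vertices, so that (i) every 1-factor of $G^{*}$ restricts to an odd $[1,b]$-factor of $G$ and, conversely, every odd $[1,b]$-factor of $G$ extends to a 1-factor of $G^{*}$, and (ii) the Tutte condition $o(G^{*}-T) \le |T|$ for every $T \subseteq V(G^{*})$ is equivalent to the Amahashi condition $o(G-S) \le b|S|$ for every $S \subseteq V(G)$. The reason such a gadget exists is that, since $b$ is odd, the slack $b - d_H(v)$ is even for any odd $[1,b]$-factor $H$, so the $b-1$ auxiliary vertices can be matched up among themselves in exactly the right number of ways to encode the allowable odd degrees $1, 3, \dots, b$.

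The main obstacle will be precisely the second bullet: showing that Tutte's condition on $G^{*}$ collapses down to the compact Amahashi condition on $G$. Given an arbitrary $T \subseteq V(G^{*})$, I have to identify a corresponding $S \subseteq V(G)$ so that $o(G^{*}-T)$ is controlled by $o(G-S)$ and $|T|$ is at least $b|S|$; the expected extremal choice is $T = S \cup (\text{a specific subset of each } B_v)$, but components of $G^{*} - T$ may straddle several gadgets and original vertices, and tracking their parities requires care. A cleaner alternative would be to deduce the statement from Lov\'asz's $(g,f)$-parity factor theorem with $g \equiv 1$ and $f \equiv b$, in which case the obstacle becomes algebraic rather than combinatorial: one must verify that Lov\'asz's general deficiency inequality, which involves both a deleted set $S$ and a second set $T$, simplifies to $o(G-S) \le b|S|$ when $g \equiv 1$, $f \equiv b$, and $b$ is odd.
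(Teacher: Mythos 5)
First, a point of context: the paper does not prove this statement at all --- Theorem~\ref{A} is Amahashi's theorem, quoted from \cite{A} and used as a black box. So there is no in-paper proof to compare against; your proposal has to stand on its own. Your necessity argument does stand on its own: the parity count $\sum_{v\in C} d_H(v) = 2|E(H[C])| + e_H(C,S)$ for an odd component $C$ of $G-S$ (using that all $H$-edges leaving $C$ land in $S$) correctly forces $e_H(C,S)\ge 1$, and the chain $o(G-S)\le e_H(V(G)\setminus S,S)\le \sum_{v\in S} d_H(v)\le b|S|$ is complete and correct.

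The sufficiency direction, however, is where the entire difficulty lives, and neither of your two routes is carried out; worse, the first route as stated cannot work. If $G^{*}$ retains $v$ as a single vertex and merely attaches a pendant gadget $B_v$, then in any $1$-factor of $G^{*}$ the vertex $v$ is covered by exactly one edge, so the restriction to $E(G)$ gives $v$ degree $0$ or $1$ --- it can never produce degree $3$. Already $G=K_{1,3}$ defeats clause (ii) of your plan: its unique odd $[1,3]$-factor is $K_{1,3}$ itself, with the center of degree $3$, and no extension of three center-edges fits inside a perfect matching. A correct Tutte-style reduction must \emph{split} $v$ into several external copies (one per incident edge, or one per unit of allowed degree) joined to internal parity vertices, not keep $v$ intact; and with that change the bookkeeping in your ``main obstacle'' paragraph changes substantially, since $S\subseteq V(G)$ no longer sits inside $V(G^{*})$ in the obvious way. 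Your second route, via Lov\'asz's parity $(g,f)$-factor theorem with $g\equiv 1$, $f\equiv b$, is a standard and viable derivation, but the real content there --- showing that the two-set deficiency $b|S|+\sum_{v\in T}(d_{G-S}(v)-1)-q(S,T)$ is minimized at $T=\emptyset$, so that the condition collapses to $o(G-S)\le b|S|$ --- is exactly the step you defer. As it stands the proposal proves one implication and only gestures at the other, so I would not accept it as a proof of the theorem.
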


Thoerem~\ref{A} guarantees that if there is no odd $[1,b]$-factor in an $r$-regular graph, then there exists a subset $S \in V(G)$ such that $o(G-S) > b|S|$. By counting the number of edges between $S$ and $G-S$,
we can show that $G-S$ has at least three odd components $Q_1, Q_2, Q_3$ such that $|[V(Q_i),S]| \le r-1$
(see the proof of Theorem~\cite{LWY} or Theorem~\ref{main}). Then they found lower bounds for the largest eigenvalue in a graph in the family ${\cal F}_{r,b}$, where ${\cal F}_{r,b}$ is a family of such a possible component depending on $r$ and $b$, and those bounds are appeared above.

In this paper, we improve their bound and in fact prove that
if  $G$ is an $n$-vertex $r$-regular graph without an odd $[1,b]$-factor, then  $$\lambda_3(G) \ge \rho(r,b),$$
where
$$\rho(r,b)=\begin{cases}
	\frac{r-2+\sqrt{(r+2)^2-4(\lceil \frac rb\rceil-2)}}{2} & \text{ if both } r \text{ and } \lceil \frac rb\rceil \text{ are even, }\\
	
	\frac{r-2+\sqrt{(r+2)^2-4(\lceil \frac rb\rceil-1)}}{2} & \text{ if } r \text{ is even and } \lceil \frac rb \rceil \text{ is  odd, }\\
	
	\frac{r-3+\sqrt{(r+3)^2-4(\lceil \frac rb\rceil-2)}}{2} & \text{ if both } r \text{ and }  \lceil \frac rb\rceil \text{ are odd,} \\
	
	\frac{r-3+\sqrt{(r+3)^2-4(\lceil \frac rb\rceil-1)}}{2} & \text{ if } r \text{ is odd and } \lceil \frac rb\rceil \text{ is even.}
	\end{cases}$$

The bounds that we found are sharp in a sense that there exists a graph $H$ in ${\cal F}_{r,b}$ such that
$\lambda_1(H)=\rho(r,b)$.	




For undefined terms, see West~\cite{W} or Godsil and Royle~\cite{GR}.

\section{Construction}

Suppose that $\varepsilon=\begin{cases} 2 & \text{if } r \text{ and } \lceil \frac{r}{b} \rceil \text{ has same parity} \\ 1 & \text{otherwise} \end{cases}$ and $\eta= \lceil \frac{r}{b} \rceil - \varepsilon$.
In this section, we provide graphs $H_{r,\eta}$ such that $\lambda_1(H_{r,\eta})=\rho(r,b)$. These graphs show that the bounds in Theorem~\ref{main} are sharp. 

Now, we define the graph $H_{r,\eta}$ as follows: $$H_{r,\eta}=\begin{cases}
\mathrm{K}_{r+1-\eta} \vee \overline{\frac{\eta}{2}\mathrm{K}_2} &\text{if } r \text{ is even, }\\
\overline{\mathrm{C}_\eta} \vee \overline{\frac{r+2-\eta}{2}\mathrm{K}_2} & \text{if } r \text{ is odd. }
\end{cases}$$

To compute the spectral radius of $H_{r,\eta}$, 
the notion of “equitable partition” of a vertex set in a graph is used. Consider a partition $V(G) = V_1 \cup \cdots \cup V_s$ of the vertex set of a graph $G$ into $s$ non-empty subsets. For $1 \le i, j \le s$, let $q_{i,j}$ denote the average number of neighbours in $V_j$ of the vertices in $V_i$. The quotient matrix of this partition is the $s \times s$ matrix whose $(i, j)$-th entry equals $q_{i,j}$. The eigenvalues of the quotient matrix interlace the
eigenvalues of $G$. This partition is {\it equitable} if for each $1 \le i, j \le s$, any vertex $v \in V_i$ has exactly $q_{i,j}$ neighbours in $V_j$. In this case, the eigenvalues of the quotient matrix are eigenvalues of $G$ and the spectral radius of the quotient matrix equals the spectral radius of $G$ (see \cite{BH},\cite{GR} for more details).


\begin{theorem}
For $r \ge 3$ and $b \ge 1$, we have $\lambda_{1}(H_{r,\eta})=\rho(r,b)$.
\end{theorem}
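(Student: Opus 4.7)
The plan is to partition $V(H_{r,\eta})$ along the two sides of the join defining it, verify that this partition is equitable, and then invoke the equitable partition theorem quoted above. Since $H_{r,\eta}$ is always a join and hence connected, this will give $\lambda_1(H_{r,\eta})$ as the largest eigenvalue of a $2\times 2$ quotient matrix $Q$, and the remaining task will be a one-shot eigenvalue calculation in each of the two parity regimes for $r$.

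First I would handle the case in which $r$ is even. Here the definition of $\eta$ forces $\eta$ to be even, so $\frac{\eta}{2}K_2$ makes sense. Taking $V_1:=V(K_{r+1-\eta})$ and $V_2:=V(\overline{\frac{\eta}{2}K_2})$, the count is that every vertex of $V_1$ has $r-\eta$ neighbors in $V_1$ and all $\eta$ neighbors in $V_2$, while every vertex of $V_2$ has $r+1-\eta$ neighbors in $V_1$ and $\eta-2$ in $V_2$ (it is non-adjacent only to its own matching partner). The partition is therefore equitable with quotient matrix
\[
Q=\begin{pmatrix} r-\eta & \eta \\ r+1-\eta & \eta-2 \end{pmatrix},
\]
and a direct expansion of $\det(Q-\lambda I)$ gives $\lambda^2-(r-2)\lambda+(\eta-2r)$, whose larger root is $\frac{r-2+\sqrt{(r+2)^2-4\eta}}{2}=\rho(r,b)$, which matches both even-$r$ branches.

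Next I would handle the case where $r$ is odd, where instead $\eta$ is odd and $r+2-\eta$ is even. Taking $V_1:=V(\overline{C_\eta})$ and $V_2:=V(\overline{\frac{r+2-\eta}{2}K_2})$, each vertex of $V_1$ has $\eta-3$ neighbors in $V_1$ (it misses exactly its two cycle-neighbors inside $\overline{C_\eta}$) and $r+2-\eta$ neighbors in $V_2$, while each vertex of $V_2$ has $\eta$ in $V_1$ and $r-\eta$ in $V_2$. The quotient matrix becomes
\[
Q=\begin{pmatrix} \eta-3 & r+2-\eta \\ \eta & r-\eta \end{pmatrix},
\]
with characteristic polynomial $\lambda^2-(r-3)\lambda+(\eta-3r)$, so the larger root is $\frac{r-3+\sqrt{(r+3)^2-4\eta}}{2}=\rho(r,b)$, again matching both odd-$r$ branches.

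The algebra on each side is short and the substitution $\eta=\lceil r/b\rceil-\varepsilon$ immediately recovers the four formulas in the statement, so I do not expect any computational difficulty. The step most in need of care is verifying equitability of the two partitions, which reduces to the simple local observations that inside $\overline{kK_2}$ every vertex is non-adjacent to exactly one other (its matching partner) and inside $\overline{C_\eta}$ every vertex is non-adjacent to exactly two others (its cycle neighbors). Once these local degree counts are in place, the assembly of $Q$ and the extraction of its larger eigenvalue complete the proof.
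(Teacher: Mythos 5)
Your proof is correct and follows essentially the same route as the paper: partition along the two sides of the join, verify equitability, and read off the largest root of the $2\times 2$ quotient matrix, which matches $\rho(r,b)$ after substituting $\eta=\lceil r/b\rceil-\varepsilon$. The only difference is that you write out both parity cases explicitly, whereas the paper presents the odd-$r$ case and declares the even case similar; your quotient matrices and characteristic polynomials agree with the paper's in both cases.
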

\begin{proof} We prove this theorem only in the case when $r$ is odd because the proof of the other case is similar.

Consider the vertex partition $\{V(\overline{\mathrm{C}_\eta}),V(\overline{\frac{r+2-\eta}{2}\mathrm{K}_2})\}$ of $H_{r,\eta}$. The quotient matrix of the vertex partitions equals
	$$Q=\begin{pmatrix}
	\eta-3 & r+2-\eta \\
	\eta & r-\eta 
	\end{pmatrix}
	$$
	
The characteristic polynomail of $Q$ is
	$$ p(x)=(x-\eta+3)(x-r+\eta)-(r+2-\eta)\eta.$$
	
Since the vertex partition is equitable,  the largest root of the graph $H_{r,\eta}$ equals the largest root of the polynomial, which is 
$\lambda_1(Q) = \frac{ r-3 + \sqrt{ (r+3)^2 -4\eta } }2$. 
\end{proof}

\section{Main results}


In this section, we prove an upper bound for $\lambda_3(G)$ in an $r$-regular graph $G$ with even number of vertices to guarantee the existence of an odd $[1,b]$-factor by using Theorem~\ref{A} and Theorem~\ref{ind}.


\begin{thm}{\rm \cite{BH,GR}}\label{ind}
	If $H$ is an induced subgraph of a graph $G$, then $\lambda_i(H) \le \lambda_i(G)$ for all $i \in \{1,\ldots,|V(H)|\}$.
\end{thm}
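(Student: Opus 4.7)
The plan is to invoke the Courant--Fischer min-max characterization of the eigenvalues of a real symmetric matrix. Write $n=|V(G)|$ and $m=|V(H)|$. The key observation is that because $H$ is an \emph{induced} subgraph of $G$, the adjacency matrix $A(H)$ is exactly the $m\times m$ principal submatrix of $A(G)$ obtained by deleting the rows and columns indexed by $V(G)\setminus V(H)$. Hence the statement reduces to the general Cauchy interlacing fact: for any real symmetric $n\times n$ matrix $A$ with $m\times m$ principal submatrix $B$, one has $\lambda_i(B)\le\lambda_i(A)$ for every $i\in\{1,\ldots,m\}$.

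First, I would recall the variational formula
$$\lambda_i(M)=\max_{\substack{U\le\mathbb{R}^k\\ \dim U=i}}\;\min_{\substack{x\in U\\ x\ne 0}}\frac{x^{\top}Mx}{x^{\top}x}$$
valid for any real symmetric $k\times k$ matrix $M$. Next, I would construct the natural embedding $\mathbb{R}^m\hookrightarrow\mathbb{R}^n$: for $x\in\mathbb{R}^m$, let $\tilde x\in\mathbb{R}^n$ be the vector whose coordinates indexed by $V(H)$ equal those of $x$, with zeros in the remaining $n-m$ coordinates. Because $A(G)$ and $A(H)$ coincide on entries indexed by $V(H)\times V(H)$, this embedding preserves the relevant quadratic forms and norms:
$$\tilde x^{\top}A(G)\tilde x=x^{\top}A(H)x\quad\text{and}\quad\tilde x^{\top}\tilde x=x^{\top}x.$$
Consequently every $i$-dimensional subspace $U\le\mathbb{R}^m$ is mapped to an $i$-dimensional subspace $\widetilde U\le\mathbb{R}^n$ on which the Rayleigh quotient of $A(G)$ agrees with that of $A(H)$.

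To finish, I would combine the two steps: applying the min-max formula to $A(H)$ and then transporting along the embedding yields
$$\lambda_i(H)=\max_{\widetilde U}\min_{0\ne \tilde x\in\widetilde U}\frac{\tilde x^{\top}A(G)\tilde x}{\tilde x^{\top}\tilde x},$$
where the maximum is taken only over those $i$-dimensional subspaces of $\mathbb{R}^n$ that are supported on the coordinates of $V(H)$. Relaxing this to a maximum over \emph{all} $i$-dimensional subspaces of $\mathbb{R}^n$ can only increase the value and, by Courant--Fischer, produces exactly $\lambda_i(G)$. Therefore $\lambda_i(H)\le\lambda_i(G)$, as claimed.

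Since the statement is the Cauchy interlacing theorem applied to adjacency matrices, I do not anticipate any serious obstacle. The only point worth verifying carefully is that the zero-padding preserves both $x^{\top}Mx$ and $x^{\top}x$, which is precisely where the hypothesis ``induced'' (rather than merely ``subgraph'') is used: if $H$ were a non-induced spanning subgraph, the adjacency matrix $A(H)$ would no longer be a principal submatrix of $A(G)$ and the identification of Rayleigh quotients would fail.
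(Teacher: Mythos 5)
Your proof is correct: it is the standard Cauchy interlacing argument via the Courant--Fischer min-max theorem, using the fact that $A(H)$ is a principal submatrix of $A(G)$ precisely because $H$ is induced. The paper offers no proof of its own here---it cites this as a known result from Brouwer--Haemers and Godsil--Royle---and your argument is exactly the one found in those references, so there is nothing further to compare.
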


\begin{thm}\label{main}
	Let $r \ge 3$, and $b$ be a positive odd integer less than $r$. If $\lambda_3(G)$ of an $r$-regular graph $G$ with even number of vertices is smaller than $\rho(r,b)$, then $G$ has an odd $[1,b]$-factor.
\end{thm}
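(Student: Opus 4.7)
The plan is to prove the contrapositive: assuming $G$ (connected, in line with the preceding work~\cite{BH2,CGH,LWY} on this topic) has no odd $[1,b]$-factor, show $\lambda_3(G) \ge \rho(r,b)$. By Theorem~\ref{A}, there is $S \subseteq V(G)$ with $o(G-S) > b|S|$. A parity count on $|V(G)| = |S| + \sum_C |V(C)|$ (summed over components of $G-S$), together with $|V(G)|$ even and $b$ odd, forces $o(G-S) \equiv |S| \pmod 2$ and hence $o(G-S) \ge b|S|+2$. For each odd component $Q$ of $G-S$, set $q_Q := |[V(Q), S]|$; the handshake relation $r|V(Q)| = 2|E(Q)| + q_Q$ gives $q_Q \equiv r \pmod 2$ (matching the parity of $\eta$), so $q_Q > \eta$ forces $q_Q \ge \eta+2$. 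Finally $\sum_Q q_Q \le r|S|$, and because $G$ is connected with $S \neq \emptyset$, each $q_Q \ge \delta$, where $\delta = 1$ if $r$ is odd and $\delta = 2$ if $r$ is even.

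The first step is to find three odd components $Q_1, Q_2, Q_3$ with $q_{Q_i} \le \eta$. Letting $N$ be the number of such components, splitting the edge count by the threshold $\eta$ gives $\delta N + (\eta+2)(o(G-S) - N) \le r|S|$, which rearranges to $N(\eta+2-\delta) \ge o(G-S)(\eta+2) - r|S|$. Combining $o(G-S) \ge b|S|+2$ with $b(\eta+2) \ge r$ (from $\eta + \varepsilon = \lceil r/b \rceil \ge r/b$) makes the right-hand side at least $2(\eta+2)$, so $N \ge 2(\eta+2)/(\eta+2-\delta) > 2$, whence $N \ge 3$ (the case $\eta=0$ is trivial since then $\rho(r,b) = r$).

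The second step, and the heart of the argument, is to show $\lambda_1(Q) \ge \rho(r,b)$ for every odd component $Q$ with $q_Q \le \eta$. Set $m = |V(Q)|$, let $A$ be the vertices of $Q$ with a neighbor in $S$, and let $B = V(Q)\setminus A$; then $|A| \le q_Q \le \eta$, and each $v \in B$ has all $r$ of its neighbors inside $V(Q)$, forcing $m \ge r+1$. If $m$ is sufficiently large — explicitly $m \ge r+3$ for $r$ even or $m \ge r+4$ for $r$ odd (using that $m$ is odd) — the average degree $r - q_Q/m$ already exceeds $\rho(r,b)$, thanks to the closed-form inequalities $\rho \le r - \eta/(r+2)$ ($r$ even) and $\rho \le r - \eta/(r+3)$ ($r$ odd), so $\lambda_1(Q) \ge \rho(r,b)$. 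The remaining small-$m$ case ($m = r+1$ if $r$ even, $m = r+2$ if $r$ odd) matches $|V(H_{r,\eta})|$ and has rigid structure: for $r$ even, $B$ is a clique and $Q = K_{|B|} \vee G[A]$ with $|E(G[A])| = \binom{|A|}{2} - q_Q/2$; for $r$ odd, every $B$-vertex has exactly one non-neighbor inside $V(Q)$. In either case the $2\times 2$ quotient matrix of the partition $(B,A)$ interlaces $\lambda_1(Q)$ from below, and a direct computation (after invoking the defining quadratic of $\rho$ from Section~2) reduces the required bound $\lambda_1(\text{quotient}) \ge \rho(r,b)$ to an elementary inequality in $(|A|, q_Q)$, minimized exactly at $Q = H_{r,\eta}$.

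With the $Q_i$ in hand, $H = Q_1 \cup Q_2 \cup Q_3$ is an induced subgraph of $G$ whose spectrum is the union of the three component spectra, so $\lambda_3(H) \ge \min_i \lambda_1(Q_i) \ge \rho(r,b)$; Theorem~\ref{ind} then gives $\lambda_3(G) \ge \lambda_3(H) \ge \rho(r,b)$. The step I expect to be most delicate is the spectral bound for small $m$: verifying the quotient-matrix inequality uniformly over the two-parameter family $(|A|, q_Q)$ with $|A| \le \eta$, and in the $r$-odd subcase dealing with the additional freedom that a $B$-vertex's unique non-neighbor may lie in either $A$ or $B$. The counting in the first step is a routine but careful manipulation, and the interlacing in the final step is standard.
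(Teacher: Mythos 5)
Your proposal follows essentially the same route as the paper's own proof: the contrapositive via Amahashi's theorem, a parity-plus-edge count to extract three odd components each sending at most $\eta$ edges to $S$, the average-degree bound $\lambda_1 \ge 2|E|/|V|$ to dispose of components with more than $r+1+x$ vertices, and a $2\times 2$ quotient matrix with interlacing for the extremal size. The one substantive difference is that you keep $(|A|, q_Q)$ as free parameters in the quotient-matrix step instead of reducing to $q_Q=\eta$ as the paper's Claim~2 does; the deferred verification there does go through (the largest quotient eigenvalue is entrywise-monotone decreasing in $q_Q$, and for $q_Q=\eta$ the evaluation of the characteristic polynomial at $\rho(r,b)$ is increasing in $|A|$ and vanishes at $|A|=\eta$), so the plan is sound as stated.
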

\begin{proof}
We prove the contrapositive. Assume that an $r$-regular graph $G$ with even number of vertices has no odd $[1,b]$-factor.
By Theorem~\ref{A}, there exists a vertex subset $S\subseteq V(G)$ such that
$o(G-S) > b|S|$. Note that since $|V(G)|$ is even, $b$ is odd, and $o(G-S)\equiv |S|~(\!\!\!\mod 2)$, we have $o(G-S)\ge b|S|+2$. Let $G_1,\ldots,G_q$ be the odd components of $G-S$, where $q=o(G-S)$.\\

\noindent
{\it Claim 1. There are at least three odd components, say $G_1,G_2,G_3$, such that $|[V(G_i),S]|< \lceil\frac rb\rceil$ for all $i\in\{1,2,3\}$.}

 Assume to the contrary that there are at most two such odd components in $G-S$. Since $G$ is $r$-regular, we have
$$r|S| \ge \sum_{i=1}^q |[V(G_i),S]| \ge \lceil\frac rb\rceil(q-2)+2 \ge \lceil\frac rb\rceil b|S|+2 \ge r|S|+2,$$
which is a contradiction.

By Theorem~\ref{ind}, we have 
\begin{equation}
\lambda_3(G) \ge \lambda_3(G_1\cup G_2 \cup G_3) \ge \min_{i \in \{1,2,3\}}\lambda_1(G_i).
\end{equation}

Now, we prove that if $H$ is an odd component of $G-S$ such that $|[V(H),S]| < \lceil\frac rb\rceil$, then $\lambda_1(H) \ge \rho(r,b)$.\\

\noindent
{\it Claim 2. If $H$ is an odd components of $G-S$ such that $|[V(H),S]| < \lceil\frac rb\rceil$ and if $\lambda_1(H)\le \lambda_1(H')$ for all odd components $H'$ in $G-S$ such that $|[V(H'),S]| < \lceil\frac rb\rceil$, then we have $|V(H)|=\begin{cases}
r+2 \text{ if } r \text{ is odd,}\\
r+1 \text{ if } r \text{ is even}
	\end{cases}$, and ~$2|E(H)|=\begin{cases}
		r(r+2)-\eta \text{ if } r \text{ is odd,} \\
		r(r+1)-\eta \text{ if } r \text{ is even}.
\end{cases}$} 

Let $x=\begin{cases}
1 \text{ if } r \text{ is odd,}\\
0 \text{ if } r \text{ is even}.
\end{cases}$ 
Since $|[V(H),S]| < \lceil\frac rb\rceil < r$ and $G$ is $r$-regular, we have $|V(H)|\ge r+1+x$ since $H$ has an odd number of vertices.
If $|V(H)| > r+1+x$, then we have $|V(H)| \ge r+3+x$ since $H$ has an odd number of vertices. 
Thus it suffices to show $\rho(r,b) < \lambda_1(H)$ if $|V(H)| \ge r+3+x$. By using the fact that $\lambda_1(G) \ge \frac{2|E(G)|}{|V(G)|}$ for any graph $G$, we have


$$\lambda_1(H) > \frac{r|V(H)|-\eta}{|V(H)|} \ge \frac{r(r+3+x)-\eta}{r+3+x} > \frac{r-2-x+\sqrt{(r+2+x)^2-4\eta}}{2}.$$\\




Now, we prove this theorem by considering two cases depending on the parity of $r$.

{\it Case 1. $r$ is even.} By Claim 2, assume that $H$ is an odd component of $G-S$ such that $|[V(H),S]| < \lceil\frac rb\rceil$, $|V(H)|=r+1$, and $2|E(H)|=r(r+1)-\eta$. Then there are at least $r+1-\eta$ vertices of degree $r$. Let $V_1$ be a set of vertices with degree $r$ such that $|V_1|=r+1-\eta$, and let $V_2$ be the remaining vertices in $V(H)$. Then the quotient matrix of the vertex partition $\{V_1,V_2\}$ of $H$ equals
$$\begin{pmatrix}
r-\eta & \eta\\
r+1-\eta & \eta-2
\end{pmatrix}$$\\
whose characteristic polynomial is $p(x)=(x-r+\eta)(x-\eta+2)-\eta(r+1-\eta)$.
Since the largest root of $p(x)$ equals $\rho(r,b)$, we have
$\lambda_1(H) \ge \rho(r,b).$\\

{\it Case 2. $r$ is odd.} By Claim 2, assume that $H$ is an odd component of $G-S$ such that $|[V(H),S]| < \lceil\frac rb\rceil$, $|V(H)|=r+2$, and $2|E(H)|=r(r+2)-\eta$. Then there are at least $r+2-\eta$ vertices of degree $r$. Let $V_1$ be a set of vertices with degree $r$ such that $|V_1|=r+2-\eta$, and let $V_2$ be the remaining vertices in $V(H)$. Suppose that there are $m_{12}$ edges between $V_1$ and $V_2$. Note that $(r+2-\eta)(\eta-1) \le m_{12} \le (r+2-\eta)\eta$.
Then the quotient matrix of the vertex partion $\{V_1,V_2\}$ of $H$ equals
	$$
	\begin{pmatrix}
	r- \frac{m_{12}}{r+2-\eta} & \frac{m_{12}}{r+2-\eta} \\
	\frac{m_{12}}{\eta} & r-1-\frac{m_{12}}{\eta}
	\end{pmatrix}
	$$
	whose characteristic polynomial is $q(x)=(x-r+ \frac{m_{12}}{r+2-\eta})(x-r+1+\frac{m_{12}}{\eta})-\frac{m_{12}^2}{(r+2-\eta)\eta}$.
	
	Note that since $(r+2-\eta)(\eta-1) \le m_{12} \le (r+2-\eta)\eta$,  $m_{12}$ can be expressed $m_{12}=(r+2-\eta)\eta -t$, where $0 \le t \le r+2-\eta$. Thus we have
		$$q(x)=x^2-(r-3+\frac{t(r+2)}{(r+2-\eta)\eta})x-3r+\eta-\frac{t}{r+2-\eta}+\frac{tr(r+2)}{(r+2-\eta)\eta}$$$$=x^2-(r-3)x-3r+\eta-\frac{t(r+2)}{(r+2-\eta)\eta}x-\frac{t}{r+2-\eta}+\frac{tr(r+2)}{(r+2-\eta)\eta}.$$
	Note that $q(\rho(r,b)) =-\frac{t(r+2)}{(r+2-\eta)\eta} (\rho(r,b)+\frac{\eta}{r+2}-r) \le 0$, since $\eta \ge 1$ and $0 \le t \le r+2-\eta$.
	
	
	
	
	
	
\end{proof}


\end{document}